\numberwithin{equation}{section}
\newtheorem{thm}{Theorem}[section]
\newtheorem{cor}[thm]{Corollary}
\newtheorem{lem}[thm]{Lemma}
\newtheorem{mainthm}[thm]{Main Theorem}
\theoremstyle{definition}
\newcommand{\A}{{\mathscr A}}
\newcommand{\K}{{\mathscr K}}
\newcommand{\es}{{\mathscr S}}
\newcommand{\C}{{\mathbb C}}
\newcommand{\D}{{\mathbb D}}
\newcommand{\W}{{\mathscr W}}
\newcommand{\sphere}{{\widehat{\mathbb C}}}
\newcommand{\UCV}{{\mathcal{UCV}}}
\renewcommand{\Re}{{\operatorname{Re}\,}}
\newcommand{\inv}{^{-1}}
\newcommand{\T}{{\mathcal T}}
\begin{document}
\baselineskip=17pt

\title[Norm estimates of Schwarzian derivative]{
Sharp norm estimate of Schwarzian derivative for a class of convex functions
}
\author[S.~Kanas]{Stanis\l awa Kanas}
\address{Department of Mathematics\\ Rzesz\'ow University of
Technology\\
W. Pola 2, PL-35-959 Rzesz\'ow,  Poland}
\email{skanas@prz.rzeszow.pl}
\author[T. Sugawa]{Toshiyuki Sugawa}
\address{Graduate School of Information Sciences\\
Tohoku University\\ Aoba-ku, Sendai 980-8579, Japan}
\email{sugawa@math.is.tohoku.ac.jp}

\date{}

\renewcommand{\thefootnote}{}

\footnote{2010 \emph{Mathematics Subject Classification}: Primary
30C45; Secondary 30F60.}

\footnote{\emph{Key words and phrases}: strongly convex function,
uniformly convex function, quasiconformal extension.}

\renewcommand{\thefootnote}{\arabic{footnote}}
\setcounter{footnote}{0}

\maketitle

\begin{abstract}
We establish a sharp norm estimate of the Schwarzian derivative for
a function in the classes of convex functions introduced by Ma and
Minda [Proceedings of the Conference on Complex Analysis,
International Press Inc., 1992, 157-169]. As applications, we give
sharp norm estimates for strongly convex functions of order
$\alpha,~0<\alpha<1,$ and for uniformly convex functions.
\end{abstract}

\maketitle

\section{Background and main result}

Let $\A$ be the class of analytic functions $f$ on the
unit disk $\D=\{z\in\C: |z|<1\}$ satisfying the normalization conditions
$f(0)=0$ and $f'(0)=1$
and let $\es$ be the class of univalent functions in $\A.$
The Schwarzian derivative
\begin{equation*}
S_f=\left(\frac{f''}{f'}\right)'-\frac12\left(\frac{f''}{f'}\right)^2
=\frac{f'''}{f'}-\frac32\left(\frac{f''}{f'}\right)^2
\end{equation*}
and its norm (the hyperbolic sup-norm)
$$
\|S_f\|=\sup_{z\in\D}(1-|z|^2)^2|S_f(z)|
$$
play an important role in the theory of Teichm\"uller spaces. Key
results concerning the Schwarzian derivative are summarized in the
following theorem.

\begin{thm}[Nehari \cite{Nehari49}, K\"uhnau \cite{Kuh71},
Ahlfors-Weill \cite{AW62}]\label{Thm:A}
Let $f\in\A.$
If $f$ is univalent, then $\|S_f\|\le 6.$
Conversely, if $\|S_f\|\le2,$ then $f$ is univalent.
Moreover, let $0\le k<1.$
If $f$ extends to a $k$-quasiconformal mapping of the Riemann sphere
$\sphere$ then $\|S_f\|\le 6k.$
Conversely, if $\|S_f\|\le 2k,$ then $f$ extends to
a $k$-quasiconformal mapping of $\sphere.$
\end{thm}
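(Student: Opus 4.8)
The plan is to view the four assertions as consequences of the Möbius cocycle law for the Schwarzian. I would first record the transformation rule
\[
S_{f\circ g}=(S_f\circ g)\,(g')^2+S_g,
\]
together with the fact that $S_g\equiv0$ exactly when $g$ is a Möbius transformation. Composing $f$ with the disk automorphism $\phi_a(z)=(z+a)/(1+\bar a z)$, for which $\phi_a(0)=a$ and $\phi_a'(0)=1-|a|^2$, and using $S_{\phi_a}\equiv0$, I obtain at the origin
\[
(1-|a|^2)^2\,S_f(a)=S_{f\circ\phi_a}(0).
\]
Since an affine change of the dependent variable leaves the Schwarzian unchanged, I may normalize $f\circ\phi_a$ to lie in $\es$, so that evaluating $\|S_f\|$ is reduced to bounding $|S_g(0)|$ over all $g\in\es$. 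A short Taylor expansion of $g(z)=z+a_2z^2+a_3z^3+\cdots$ then gives $S_g(0)=6(a_3-a_2^2)$.

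For the necessity statement $\|S_f\|\le6$ I would invoke the classical inequality $|a_3-a_2^2|\le1$, valid for every $g\in\es$ as an immediate consequence of the area theorem applied to the univalent function $\zeta\mapsto1/g(1/\zeta)$ on $\{|\zeta|>1\}$. Combined with the reduction above this yields $(1-|a|^2)^2|S_f(a)|\le6$ for every $a\in\D$, i.e. $\|S_f\|\le6$. The quasiconformal refinement $\|S_f\|\le6k$ due to K\"uhnau runs along identical lines, the only change being that the area theorem is replaced by its sharpened form for maps admitting a $k$-quasiconformal extension, giving $|a_3-a_2^2|\le k$; note that composing with $\phi_a$ and normalizing are conformal operations and hence preserve the dilatation. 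I would quote this refined area theorem as the essential external input.

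For Nehari's univalence criterion I would pass to the associated second-order linear differential equation. Writing $\varphi=\tfrac12 S_f$, one has $f=u_1/u_2$ locally, where $u_1,u_2$ are independent solutions of $u''+\varphi\,u=0$, so that $f$ is univalent precisely when no nontrivial solution vanishes at two distinct points. The hypothesis $\|S_f\|\le2$ amounts to $|\varphi(z)|\le(1-|z|^2)^{-2}$, and the extremal potential is realized by $\tfrac12\log\frac{1+z}{1-z}$, whose comparison equation has the nonvanishing solution $\sqrt{1-z^2}$. The plan is a Sturm-type comparison forcing any solution of $u''+\varphi\,u=0$ to have at most one zero. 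I expect this oscillation argument, where the sharp constant $2$ is consumed, to be the main obstacle.

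Finally, for the Ahlfors--Weill implication $\|S_f\|\le2k\Rightarrow k$-quasiconformal extendibility, I would exhibit the explicit extension to $\{|z|>1\}$ by reflection,
\[
F(z)=\frac{u_1(1/\bar z)-(z-1/\bar z)\,u_1'(1/\bar z)}{u_2(1/\bar z)-(z-1/\bar z)\,u_2'(1/\bar z)},
\]
verify its continuity across $|z|=1$, where the correction terms vanish and $F$ agrees with $f=u_1/u_2$, and then compute its complex dilatation. A direct differentiation gives $|\mu_F(z)|=\tfrac12(|z|^2-1)^2\,|S_f(1/\bar z)|\,|z|^{-4}$, which equals $\tfrac12(1-|w|^2)^2|S_f(w)|$ with $w=1/\bar z$ and is therefore at most $k<1$ by hypothesis; hence $F$ is a $k$-quasiconformal extension of $f$. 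The delicate point here is the Beltrami computation together with the verification that $F$ is a genuine homeomorphism of $\sphere$ rather than merely locally quasiregular.
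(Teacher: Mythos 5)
First, a point of reference: the paper does not prove Theorem \ref{Thm:A} at all --- it is quoted as classical background (Nehari, K\"uhnau, Ahlfors--Weill), with Lehto's book cited as the source --- so your proposal can only be measured against the standard proofs. Your treatment of the two \emph{necessity} statements is correct and is exactly the classical argument: M\"obius invariance gives $(1-|a|^2)^2S_f(a)=S_{f\circ\phi_a}(0)$, the expansion gives $S_g(0)=6(a_3-a_2^2)$ for normalized $g$, and the area theorem applied to $\zeta\mapsto 1/g(1/\zeta)$ gives $|a_3-a_2^2|\le1$, hence $\|S_f\|\le6$; replacing the area theorem by K\"uhnau's sharpened version for $k$-quasiconformally extendible maps gives $\|S_f\|\le6k$. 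Quoting that sharpened inequality as external input is legitimate, though you should be aware it is essentially the entire content of K\"uhnau's half of the statement.

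The two \emph{sufficiency} statements, however, are not actually proved in your proposal. For Nehari's criterion you set up the right framework ($f=u_1/u_2$, univalence equivalent to no nontrivial solution of $u''+\tfrac12S_fu=0$ having two zeros, comparison with the potential $(1-z^2)^{-2}$ and its nonvanishing solution $\sqrt{1-z^2}$), but the step you explicitly defer is the whole proof: a real-variable Sturm comparison does not apply since $S_f$ is complex-valued, and what is needed is Nehari's argument --- after a M\"obius normalization placing the two zeros at $\pm r$, multiply the ODE by $\bar u$, integrate by parts to get $\int_{-r}^r|u'|^2\,dx\le\int_{-r}^r|u|^2(1-x^2)^{-2}\,dx$, and contradict the sharp Wirtinger--Hardy type inequality $\int_{-r}^r|u|^2(1-x^2)^{-2}\,dx<\int_{-r}^r|u'|^2\,dx$ valid for nontrivial $u$ with $u(\pm r)=0$. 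For Ahlfors--Weill, your formula contains a sign error that is fatal rather than cosmetic: the extension must be the osculating M\"obius transformation, $F(z)=\bigl(u_1(w)+(z-w)u_1'(w)\bigr)/\bigl(u_2(w)+(z-w)u_2'(w)\bigr)$ with $w=1/\bar z$. With your minus signs a direct computation (using $u_i''=-\tfrac12S_fu_i$ and the constancy of the Wronskian) gives $\mu_F=\bar z^{-2}\bigl(2+\tfrac12(z-w)^2S_f(w)\bigr)$, so $|\mu_F|\to2$ as $|z|\to1$ and the map is not quasiconformal at all; the dilatation identity you state, $|\mu_F(z)|=\tfrac12(1-|w|^2)^2|S_f(w)|$, is the one satisfied by the plus-sign formula. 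Finally, the homeomorphism property that you flag as ``delicate'' is a genuine missing step, not a verification: in Ahlfors and Weill's own proof it rests on the uniqueness theorem for Beltrami equations (hence the title of their paper), and without some such argument local quasiregularity does not yield a quasiconformal homeomorphism of $\sphere$.
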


Here, a mapping $f:\sphere\to\sphere$ of the Riemann sphere
$\sphere=\C\cup\{\infty\}$ is called $k$-quasiconformal if
$f$ is a sense-preserving homeomorphism of $\sphere$ and
has locally integrable partial derivatives on $\C\setminus\{f\inv(\infty)\}$
with $|f_{\bar z}|\le k|f_z|$ a.e.
The best reference to the above theorem is Lehto's book \cite{Lehto:univ}.

The universal Teichm\"uller space $\T$ can be identified with the
set of Schwarzian derivatives of univalent analytic functions on
$\D$ with quasiconformal extensions to $\sphere.$ It is known that
$\T$ is a bounded domain in the Banach space of analytic functions
on $\D$ with finite hyperbolic sup norm (see \cite{Lehto:univ}).

In connection with Teichm\"uller spaces, it is an interesting problem
to consider estimation of the norm of the Schwarzian derivatives
of typical subclasses of univalent functions.
A function $f\in\A$ is called {\it starlike} (resp.~{\it convex})
if $f$ is univalent and the image $f(\D)$ is starlike with respect to
the origin (resp.~convex).
The classes of starlike and convex functions are denoted by
$\es^*$ and $\K,$ respectively.
It is well known that $f\in\A$ is starlike (resp.~convex) if and only if
$\Re[zf'(z)/f(z)]>0$ (resp.~$\Re[1+zf''(z)/f'(z)]>0$).
These notions have been refined and generalized in many ways
(see \cite{Goodman:univ}).
In the present note, we are mainly concerned with strongly starlike and convex
functions.
A function $f\in\A$ is called {\it strongly starlike}
(resp.~{\it strongly convex}) of order $\alpha~(0<\alpha<1)$
if $|\arg[zf'(z)/f(z)]|<\pi\alpha/2$
(resp.~$|\arg[1+zf''(z)/f'(z)]|<\pi\alpha/2$) in $|z|<1.$
The classes of strongly starlike and convex functions of order $\alpha$
will be denoted by $\es^*_\alpha$ and $\K_\alpha,$ respectively.
See \cite{SugawaDual} for geometric characterizations of functions
in $\es^*_\alpha.$

Define $\gamma(\beta)$ for $0<\beta<1$ by
$$
\gamma(\beta)=\frac2\pi\arctan\left[
\tan\frac{\pi\beta}{2}+\frac\beta{
(1+\beta)^{\frac{1+\beta}2}(1-\beta)^{\frac{1-\beta}2}\cos(\pi\beta/2)}\right].
$$
Note that $\gamma(\beta)$ increases from 0 to 1 when $\beta$
moves from 0 to 1.
Mocanu \cite{Mocanu89} found the following relation.

\begin{thm}[Mocanu]\label{Thm:Mocanu}
$\K_{\gamma(\beta)}\subset\es^*_{\beta}$ for $0<\beta<1.$
\end{thm}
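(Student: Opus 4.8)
The plan is to reduce the claimed containment to a boundary differential inequality by passing to the logarithmic-derivative function. Given $f\in\K_{\gamma(\beta)}$, set $p(z)=zf'(z)/f(z)$, so that $p(0)=1$. Logarithmically differentiating $f'(z)=p(z)f(z)/z$ yields the standard identity
\[
1+\frac{zf''(z)}{f'(z)}=p(z)+\frac{zp'(z)}{p(z)}.
\]
Since every member of $\K_{\gamma(\beta)}$ is convex and hence starlike, we have $\Re p(z)>0$ on $\D$, so in particular $p$ is non-vanishing and $|\arg p(z)|<\pi/2$. In this notation the hypothesis $f\in\K_{\gamma(\beta)}$ reads $|\arg(p+zp'/p)|<\pi\gamma(\beta)/2$, while the desired conclusion $f\in\es^*_\beta$ is exactly $|\arg p(z)|<\pi\beta/2$ throughout $\D$.

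I would argue by contradiction. Because $\arg p(0)=0$ and $\arg p$ is continuous, if $f\notin\es^*_\beta$ then there is a point $z_0\in\D$ of least modulus at which $|\arg p(z_0)|=\pi\beta/2$, with $|\arg p(z)|<\pi\beta/2$ for $|z|<|z_0|$. At such a point I would invoke the boundary lemma of Nunokawa, which gives $z_0p'(z_0)/p(z_0)=ik\beta$, where $p(z_0)^{1/\beta}=\pm ia$ for some $a>0$ and $k\ge\frac12(a+a^{-1})$ when $\arg p(z_0)=\pi\beta/2$ (respectively $k\le-\frac12(a+a^{-1})$ in the opposite case).

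Treating the case $\arg p(z_0)=\pi\beta/2$ (the other being symmetric), write $p(z_0)=a^\beta e^{i\pi\beta/2}$ and compute
\[
w:=p(z_0)+\frac{z_0p'(z_0)}{p(z_0)}=a^\beta\cos\frac{\pi\beta}{2}+i\Bigl(a^\beta\sin\frac{\pi\beta}{2}+k\beta\Bigr),
\]
whose real and imaginary parts are both positive, so $0<\arg w<\pi/2$. Since $\arg w$ increases with $k$, its least value occurs at $k=\frac12(a+a^{-1})$, whence
\[
\tan(\arg w)\ge\tan\frac{\pi\beta}{2}+\frac{\beta}{2\cos(\pi\beta/2)}\bigl(a^{1-\beta}+a^{-1-\beta}\bigr).
\]
The heart of the argument is the one-variable minimization of $h(a)=a^{1-\beta}+a^{-1-\beta}$ over $a>0$; the critical point $a=\sqrt{(1+\beta)/(1-\beta)}$ gives $h(a)=2\bigl/\bigl[(1+\beta)^{(1+\beta)/2}(1-\beta)^{(1-\beta)/2}\bigr]$, which makes the right-hand side precisely equal to $\tan(\pi\gamma(\beta)/2)$. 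Consequently $\arg w\ge\pi\gamma(\beta)/2$, contradicting $|\arg(p+zp'/p)|<\pi\gamma(\beta)/2$; the symmetric case forces $\arg w\le-\pi\gamma(\beta)/2$, a contradiction as well, and the theorem follows.

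The main obstacle I anticipate is the sharp elementary minimization of $h(a)$ together with the recognition that its extremal value reproduces exactly the rather unwieldy constant appearing in the definition of $\gamma(\beta)$; this is where the precise form of $\gamma(\beta)$ is forced. Correctly applying Nunokawa's lemma, in particular keeping the sharp magnitude bound $k\ge\frac12(a+a^{-1})$ rather than the weaker $k\ge1$, is what collapses the whole problem to that single optimization and delivers the stated exponent.
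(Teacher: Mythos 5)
Your proposal is correct, but be aware that the paper contains no proof of this statement to compare against: Theorem \ref{Thm:Mocanu} is quoted as a known result with a citation to Mocanu's 1989 paper, and the authors use it only as background. What you have written is a genuine proof, and it is essentially the argument given by Nunokawa (Proc.\ Japan Acad.\ Ser.\ A 69 (1993)) rather than Mocanu's original one. All the steps check out: the identity $1+zf''/f'=p+zp'/p$ with $p=zf'/f$; the legitimacy of the first-boundary-point argument (since $f\in\K_{\gamma(\beta)}\subset\K$ is convex, hence starlike, $p$ is zero-free and $\Re p>0$, so $\arg p$ is well defined and the least-modulus point $z_0$ exists by continuity and compactness); the application of Nunokawa's sharp lemma with $k\ge\tfrac12(a+a^{-1})$; and the minimization of $h(a)=a^{1-\beta}+a^{-1-\beta}$ at $a^2=(1+\beta)/(1-\beta)$, whose minimum value $2\big/\big[(1+\beta)^{(1+\beta)/2}(1-\beta)^{(1-\beta)/2}\big]$ makes your lower bound for $\tan(\arg w)$ exactly $\tan(\pi\gamma(\beta)/2)$, matching the paper's definition of $\gamma(\beta)$. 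Since both $\arg w$ and $\pi\gamma(\beta)/2$ lie in $(0,\pi/2)$, comparing tangents is valid, and the case $\arg p(z_0)=-\pi\beta/2$ follows by reflection, so the contradiction is complete. Mocanu's original proof proceeds by different means (integral-operator and differential-subordination techniques of Miller--Mocanu type); your route via Nunokawa's lemma is the shorter, now-standard one, and it has the merit of making transparent where the unwieldy constant in $\gamma(\beta)$ comes from --- it is precisely the extremal value of $h$. The only ingredient you take as a black box is Nunokawa's lemma itself, which is a published sharp refinement of Jack's lemma, so that reliance is acceptable; just make sure you keep, as you did, the sharp bound $k\ge\tfrac12(a+a^{-1})$, since the weaker bound $k\ge 1$ would not yield the stated order $\beta$.
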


In other words, $\K_\alpha\subset\es^*_{\gamma\inv(\alpha)}$ for $0<\alpha<1,$
where $\gamma\inv$ denotes the inverse function of $\gamma:[0,1]\to[0,1].$
For sharp or improved relations of this kind, see a paper \cite{KS07ss} of
the authors.

We summarize important properties of strongly starlike functions as follows.

\begin{thm}\label{Thm:B}
A strongly starlike function $f$ of order $\alpha\in(0,1)$ extends to
a $\sin(\pi\alpha/2)$-quasiconformal mapping of $\sphere$
and therefore $\|S_f\|\le 6\sin(\pi\alpha/2).$
\end{thm}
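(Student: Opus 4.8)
The second assertion is immediate from the first: once we know that $f$ admits a $\sin(\pi\alpha/2)$-quasiconformal extension to $\sphere,$ Theorem \ref{Thm:A} yields $\|S_f\|\le 6\sin(\pi\alpha/2)$ at once. Thus the whole content lies in producing such an extension, and the plan is to realize it by a reflection, in the spirit of the explicit extensions of Fait, Krzy\.z and Zygmunt. Note first that a strongly starlike function of order $\alpha$ is in particular starlike, and that $p:=zf'/f$ maps $\D$ into the sector $|\arg w|<\pi\alpha/2,$ so $\Re p>0.$

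The first step is to pass to logarithmic coordinates on both sides. Writing $z=e^{\zeta}$ with $\zeta=\sigma+it,$ the left half-plane $\{\sigma<0\}$ is mapped conformally onto the punctured disk, and $\Lambda(\zeta):=\log f(e^{\zeta})$ maps $\{\sigma<0\}$ conformally onto $\widetilde\Omega:=\log f(\D).$ Since $f(\D)$ is starlike with respect to $0,$ the region $\widetilde\Omega$ is invariant under translation to the left, hence of the form $\{W:\Re W<\psi(\Im W)\}$ for some function $\psi.$ The key point is that strong starlikeness controls $\psi$: writing $\beta(t)=\Lambda(it)=b_1(t)+ib_2(t),$ one has $\beta'(t)=ip(e^{it}),$ so $b_2'=\Re p>0$ and $db_1/db_2=b_1'/b_2'=-\tan(\arg p).$ Because $|\arg p|\le\pi\alpha/2$ on $\D,$ the boundary curve is the graph of a Lipschitz function $\psi$ with $\|\psi'\|_\infty\le\tan(\pi\alpha/2),$ while $b_2$ is an increasing bijection of $\mathbb{R},$ so $\psi$ is globally well defined and suitably periodic.

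Next I would define the extension across the graph. Let $R(W)=2\psi(\Im W)-\Re W+i\,\Im W$ be the horizontal reflection in the graph $\Re W=\psi(\Im W),$ and let $\rho(\sigma+it)=-\sigma+it.$ On $\{\sigma>0\}$ set $\widetilde\Lambda=R\circ\Lambda\circ\rho;$ on the graph $R$ is the identity, so $\widetilde\Lambda$ matches $\Lambda$ continuously across $\{\sigma=0\}.$ A direct computation of the complex dilatation of $R$ gives $|\mu_R|=|\psi'|/\sqrt{1+\psi'^2},$ whence
$$
|\mu_R|\le\frac{\tan(\pi\alpha/2)}{\sqrt{1+\tan^2(\pi\alpha/2)}}=\sin\frac{\pi\alpha}{2}.
$$
Since conformal and anticonformal maps leave the maximal dilatation unchanged, $\widetilde\Lambda$ is quasiconformal with $|\mu_{\widetilde\Lambda}|\le\sin(\pi\alpha/2)$ a.e.; being the composite of two orientation-reversing maps with a conformal one, it is sense-preserving. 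Transporting back through the conformal exponential maps $z=e^{\zeta}$ and $w=e^{W}$ (which respect the periodicity and send the two ends to $0$ and $\infty$) yields a sense-preserving homeomorphism $F$ of $\sphere$ extending $f$ with the same dilatation bound, and Theorem \ref{Thm:A} completes the argument.

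The main obstacle is the bookkeeping that turns this picture into a genuine quasiconformal self-map of $\sphere$: one must check that $\Lambda$ extends to a homeomorphism of the closed half-plane (which holds because $\widetilde\Omega$ is a Lipschitz, hence John, domain, so that no quasiconformal theory is invoked here and no circularity arises), that $R$ is globally bi-Lipschitz and therefore quasiconformal with the a.e.\ dilatation computed above, and that the periodicity together with the behavior at the two ends makes the construction descend to a homeomorphism of $\sphere$ fixing $0$ and $\infty.$ The one genuinely sharp computation is the passage from the slope bound $|\psi'|\le\tan(\pi\alpha/2)$ to the reflection dilatation $\sin(\pi\alpha/2);$ everything else becomes routine once the logarithmic normalization has reduced strong starlikeness to a Lipschitz bound on a graph.
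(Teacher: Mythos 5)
Your reduction of the norm bound to the extension statement is exactly the paper's: apply K\"uhnau's half of Theorem~\ref{Thm:A} to a $\sin(\pi\alpha/2)$-quasiconformal extension. For the extension itself, however, the paper gives no proof at all --- it attributes the first assertion to Fait, Krzy\.z and Zygmunt \cite{FKZ76} and the deduction of the second to Chiang \cite{Chiang91}. What you have written is therefore a reconstruction of the cited argument rather than a variant of anything proved in the paper, and its architecture is the right one: logarithmic coordinates convert starlikeness of $f(\D)$ into invariance of $\widetilde\Omega$ under leftward translation, the hypothesis $|\arg(zf'/f)|<\pi\alpha/2$ into the slope bound $|\psi'|\le\tan(\pi\alpha/2)$ for the graph function, and the horizontal reflection (which is the radial reflection back in the $w$-plane) contributes dilatation $|\psi'|/\sqrt{1+\psi'^2}\le\sin(\pi\alpha/2)$ to the composite map; periodicity lets the glued map descend through the exponentials to a sense-preserving homeomorphism of $\sphere$ fixing $0$ and $\infty.$ The key trigonometric step and the bookkeeping you list (bi-Lipschitz $R,$ equivariance, behavior at the two ends) are correct; the one standard item you omit is that the unit circle is removable for quasiconformal maps, so the glued homeomorphism is $\sin(\pi\alpha/2)$-quasiconformal across $|z|=1$ as well.

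There is, however, one genuinely circular step. You obtain the Lipschitz bound on $\psi$ by differentiating $\beta(t)=\Lambda(it),$ i.e., by using $p(e^{it})$ on $|z|=1$; but $f$ is only assumed analytic in $\D,$ so neither the boundary values nor the boundary derivatives exist a priori, and your parenthetical defense --- that $\Lambda$ extends homeomorphically to the closed half-plane because $\widetilde\Omega$ is a Lipschitz (John) domain --- assumes the very Lipschitz property that the boundary computation was meant to establish. The repair is standard and keeps your argument intact: run the same computation on interior circles, where $\frac{d}{dt}\log f(re^{it})=ip(re^{it})$ is legitimate, to conclude that each lifted curve $t\mapsto\log f(re^{it})$ is a graph $\Re W=\psi_r(\Im W)$ with $|\psi_r'|\le\tan(\pi\alpha/2)$; since the domains $\log f(r\D)$ increase to $\widetilde\Omega$ as $r\to1,$ the functions $\psi_r$ increase pointwise to $\psi,$ and a pointwise limit of uniformly Lipschitz $2\pi$-periodic functions is again Lipschitz with the same constant (and finite, because $f(\D)\ne\C$). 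With the Lipschitz boundary established first, $f(\D)$ is a Jordan domain, Carath\'eodory's theorem supplies the homeomorphic boundary extension of $\Lambda,$ and the remainder of your construction goes through verbatim.
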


The first part is due to Fait, Krzy\.z and Zygmunt \cite{FKZ76} and
the second one is obtained from the first in combination with
Theorem \ref{Thm:A} (as was pointed out by Chiang \cite{Chiang91}).

By Theorems \ref{Thm:Mocanu} and \ref{Thm:B}, we see that a function
$f\in\K_\alpha$ extends to a
$\sin(\pi\gamma\inv(\alpha)/2)$-quasiconformal mapping of $\sphere$
and satisfies $\|S_f\|\le 6\sin(\pi\gamma\inv(\alpha)/2).$ On the
other hand, we have the following norm estimate for convex
functions.

\begin{thm}\label{Thm:C}
A convex function $f$ satisfies $\|S_f\|\le 2.$
The bound is sharp.
\end{thm}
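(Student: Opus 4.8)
The plan is to exploit the Möbius invariance of the Schwarzian norm in order to reduce the global estimate to a single coefficient inequality at the origin. First I would record the transformation law $S_{f\circ\phi}=(S_f\circ\phi)(\phi')^2+S_\phi$, together with the fact that $S_\phi\equiv0$ for a disk automorphism $\phi$. Fixing $z_0\in\D$ and taking $\phi$ to be the automorphism with $\phi(0)=z_0$, so that $|\phi'(0)|=1-|z_0|^2$, this gives $|S_{f\circ\phi}(0)|=(1-|z_0|^2)^2|S_f(z_0)|$. Since $f\circ\phi$ has the convex image $f(\D)$, its affine normalization $F$ lies again in $\K$ and satisfies $S_F=S_{f\circ\phi}$. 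Hence each value $(1-|z_0|^2)^2|S_f(z_0)|$ equals some $|S_F(0)|$ with $F\in\K$, and it suffices to prove $|S_F(0)|\le2$ for every $F\in\K$.

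Next I would translate the value $S_f(0)$ into the language of the associated Carathéodory function. Writing $f(z)=z+a_2z^2+a_3z^3+\cdots$ one has $S_f(0)=6(a_3-a_2^2)$. Setting $p(z)=1+zf''(z)/f'(z)=1+c_1z+c_2z^2+\cdots$, convexity is precisely the condition $\Re p>0$, and comparing power series in $\log f'=\int_0^z(f''/f')\,dt$ yields $a_2=c_1/2$ and $a_3=(c_2+c_1^2)/6$. Substituting, the apparently cubic quantity collapses to the clean functional
\[
S_f(0)=c_2-\tfrac12 c_1^2 .
\]

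The estimate of this functional is where the sharp input enters, and the cleanest route is the Schwarz-function substitution. Writing $p=(1+\omega)/(1-\omega)$ with $\omega(z)=w_1z+w_2z^2+\cdots$ a Schwarz function gives $c_1=2w_1$ and $c_2=2w_2+2w_1^2$, whence $S_f(0)=c_2-\tfrac12c_1^2=2w_2$. The sharp coefficient bound for Schwarz functions, $|w_2|\le1-|w_1|^2$, then immediately yields $|S_f(0)|=2|w_2|\le2(1-|w_1|^2)\le2$, which combined with the reduction of the first paragraph gives $\|S_f\|\le2$. I expect the only real subtlety to be the bookkeeping: keeping the normalizations straight through the composition $f\mapsto f\circ\phi\mapsto F$, and invoking the second Schwarz coefficient estimate in its sharp form rather than the crude $|w_2|\le1$ (which alone would also give the bound $2$, but loses the information needed for the equality analysis).

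Finally, for sharpness I would exhibit the strip mapping $f_0(z)=\tfrac12\log\frac{1+z}{1-z}\in\K$. A direct computation gives $S_{f_0}(z)=2/(1-z^2)^2$, so that $(1-|z|^2)^2|S_{f_0}(z)|=2(1-|z|^2)^2/|1-z^2|^2\le2$ with equality along the real axis, using $|1-z^2|\ge1-|z|^2$. Thus $\|S_{f_0}\|=2$, and the bound is attained.
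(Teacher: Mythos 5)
Your proposal is correct, and it is a genuinely different argument from the one in the paper. The paper obtains Theorem \ref{Thm:C} as the case $\alpha=1$ of its Main Theorem: subordination $1+zf''/f'\prec P_\alpha$ plus Dieudonn\'e's lemma (the sharp description of the possible values of $\omega'(z)$ given $\omega(z)=w$) yield a sharp pointwise bound $F(s,t)$ on $(1-|z|^2)^2|S_f(z)|$, and a quadratic discriminant analysis shows $\sup_{s<t}F(s,t)=2\alpha$. You instead exploit the M\"obius invariance of the problem: since precomposition with a disk automorphism followed by affine renormalization maps $\K$ into itself and leaves both the Schwarzian norm and the class unchanged, the global estimate collapses to the single inequality $|S_F(0)|\le 2$ for $F\in\K$, which you settle by the coefficient identity $S_F(0)=c_2-\tfrac12c_1^2=2w_2$ and the Schwarz--Pick bound $|w_2|\le 1-|w_1|^2$; your extremal strip map $\tfrac12\log\frac{1+z}{1-z}$ (equivalently, the paper's $f_1$) confirms sharpness. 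This is essentially the classical route (in the spirit of the proofs cited from Nehari and Lehto), and what it buys is brevity: no Dieudonn\'e lemma and no discriminant computation are needed, and your Schwarz--Pick inequality at the origin plays exactly the role that Dieudonn\'e's lemma plays at a general point. What it gives up is generality: the reduction to the origin fails for the classes $\K_\alpha$ ($\alpha<1$), $\K(\varphi)$ and $\UCV$, because the subordination condition $1+zf''/f'\prec\varphi$ is not invariant under precomposition with disk automorphisms; this is precisely why the paper develops the pointwise machinery of Theorem \ref{thm:general}, from which Theorem \ref{Thm:C} falls out as the boundary case $\alpha=1$ while the sharp bounds $2\alpha$ and $8/\pi^2$ for the non-invariant classes are obtained as well.
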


This result was repeatedly proved in the literature
(see \cite{Rob69}, \cite{Nehari76}, \cite{Lehto77}),
and was refined by Suita \cite{Suita96} in the following form:
the {\it sharp} ienequality
$\|S_f\|\le 8\alpha(1-\alpha)$ holds for a function $f\in\A$
with $\Re[1+zf''(z)/f'(z)]>\alpha$ and $1/2\le\alpha<1.$

Obviously, the estimate $\|S_f\|\le 6\sin(\pi\gamma\inv(\alpha)/2)$
for $f\in\K_\alpha$ is not better than Theorem \ref{Thm:C} when
$\alpha$ is close to $1.$ We will give a sharp norm estimate for
$f\in\K_\alpha.$

\begin{mainthm}\label{thm:main}
Let $f$ be a strongly convex function of order $\alpha$ for
$0<\alpha<1.$ Then the sharp inequality $\|S_f\|\le 2\alpha$ holds.
\end{mainthm}

Define a function $f_\alpha\in\K_\alpha$ by the relation
$$
1+\frac{zf_\alpha''(z)}{f_\alpha'(z)}=\left(\frac{1+z^2}{1-z^2}\right)^\alpha.
$$
Then a simple computation gives
$$f_\alpha(z)=z+\alpha z^3/3+\alpha^2z^5/5+\alpha(1+8\alpha^2)z^7/63+
\cdots$$ and thus $S_{f_\alpha}(0)=2\alpha.$ Therefore, we see that
this satisfies $\|S_{f_\alpha}\|=2\alpha.$

Combining Theorem \ref{thm:main} with the Ahlfors-Weill theorem
(Theorem \ref{Thm:A}), we obtain the following result.

\begin{cor}
A function $f\in\K_\alpha$ extends to
an $\alpha$-quasiconformal mapping of $\sphere$ for $0<\alpha<1.$
\end{cor}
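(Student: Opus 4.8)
The plan is to obtain the corollary immediately by feeding the conclusion of the Main Theorem into the converse (Ahlfors--Weill) part of Theorem~\ref{Thm:A}; once the Schwarzian norm has been controlled, no further study of $f$ is required.

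First I would note that $f\in\K_\alpha$ means exactly that $f$ is strongly convex of order $\alpha$, so Theorem~\ref{thm:main} applies verbatim and gives the sharp estimate $\|S_f\|\le 2\alpha$. Then I would set $k=\alpha$ and verify the hypotheses of the converse assertion in Theorem~\ref{Thm:A}: since $0<\alpha<1$, the number $k=\alpha$ falls within the admissible range $0\le k<1$, and the bound just obtained reads $\|S_f\|\le 2\alpha=2k$, which is precisely the required inequality $\|S_f\|\le 2k$. Theorem~\ref{Thm:A} then yields that $f$ extends to a $k$-quasiconformal, i.e.\ $\alpha$-quasiconformal, mapping of $\sphere$, which is the assertion. (In particular the univalence of $f$ is subsumed in this conclusion, so there is no need to invoke it separately, although it is of course already guaranteed by convexity.)

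There is essentially no obstacle here: all of the analytic work is carried by Theorem~\ref{thm:main}, and the corollary reduces to the substitution $k=\alpha$ together with the trivial check $k<1$. This step mirrors exactly how the quasiconformal statement in Theorem~\ref{Thm:B} is extracted from a norm estimate via Theorem~\ref{Thm:A}. The only point worth flagging is that, although the Schwarzian bound $2\alpha$ is sharp (as witnessed by the function $f_\alpha$), the Ahlfors--Weill estimate is not sharp in general, so the quasiconformality constant $\alpha$ produced in this way need not be the optimal dilatation bound.
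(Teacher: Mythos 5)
Your proof is correct and is exactly the paper's argument: the paper derives the corollary by combining Theorem~\ref{thm:main} with the Ahlfors--Weill part of Theorem~\ref{Thm:A}, i.e.\ $\|S_f\|\le 2\alpha=2k$ with $k=\alpha<1$ gives the $\alpha$-quasiconformal extension. Your closing remark that the resulting dilatation bound need not be optimal is also consistent with the paper's discussion comparing $\alpha$ with $\sin(\pi\gamma\inv(\alpha)/2)$.
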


By using Mathematica Ver.~7, we found that $\sin(\pi\gamma\inv(\alpha)/2)
<\alpha$ when $0<\alpha<0.3354$ (see Figure 1).
Therefore, the corollary gives a better bound only when $\alpha>0.3355,$
though it has obvious merit of simplicity.

\begin{figure}[htbp]
\begin{center}
\includegraphics[height=0.25\textheight, bb=0 0 240 143]{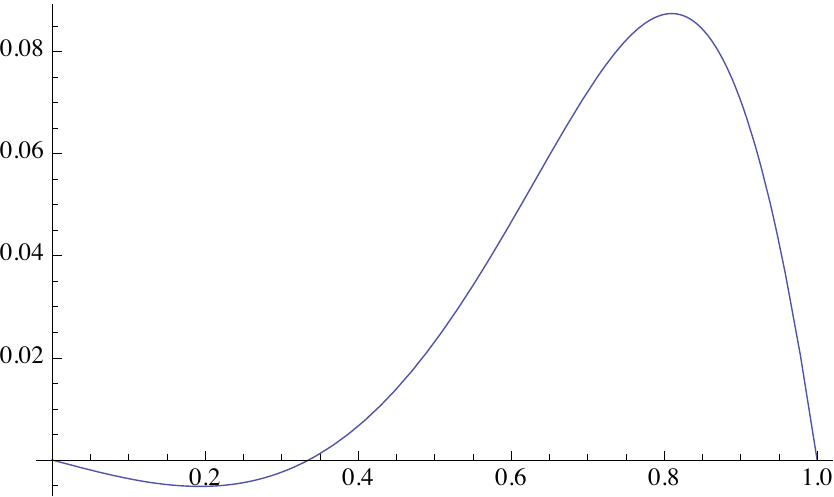}
\end{center}
\caption{Graph of $\sin(\pi\gamma\inv(\alpha)/2)-\alpha$}
\end{figure}

For some reason, the second author was even led in \cite{SugawaST}
to the expectation that each function in $\es^*_\alpha$ might extend
to an $\alpha$-quasiconformal mapping of $\sphere.$ This was
recently disproved by Yuliang Shen \cite{Shen07} for every
$0<\alpha<1.$

Goodman \cite{Good91} introduced the class $\UCV$ of uniformly
convex functions. Here, a function $f\in\A$ is called {\it uniformly
convex} if every (positively oriented) circular arc of the form
$\{z\in\D: |z-\zeta|=r\},~\zeta\in\D,~0<r<|\zeta|+1,$ is mapped by
$f$ univalently onto a convex arc.
In particular, $\UCV\subset\K.$
See also \cite{KW99}, \cite{KS05} and \cite{Kanas09}
for $k$-uniform convexity ($0\le k<+\infty$),
a more refined notion of convexity, and related results.
We have the following sharp norm estimate for $\UCV.$

\begin{mainthm}\label{thm:main2}
Let $f$ be a uniformly convex function. Then the sharp inequality
$\|S_f\|\le \frac{8}{\pi^2}$ holds. In particular, $f$ extends to a
$4/\pi^2$-quasiconformal mapping of $\sphere.$
\end{mainthm}

In \cite{Kanas09} the first author observed that a uniformly convex function
extends to a $k_1$-quasiconformal mapping of $\sphere,$
where $k_1=\sin(\pi\gamma\inv(1/2)/2)\approx 0.52311.$
Therefore,  the above bound $4/\pi^2\approx 0.40528$ is slightly better.
(Note also that a numerical computation of $K(1)=(1+k_1)/(1-k_1)
\approx 3.19387$ was not correct in \cite{Kanas09}.)

In Section 2, we provide a principle leading to a sharp norm estimate
of the Schwarzian derivative for a subclass of $\K$ given in a specific way.
By making use of it, we prove Theorem \ref{thm:main} in Section 3
and Theorem \ref{thm:main2} in Section 4.

\section{General norm estimate for convex functions}

Ma and Minda \cite{MM92B} introduced a unifying way of treatment
of various subclasses of $\K.$
Let $\varphi$ be an analytic function on $\D$ with $\varphi(0)=1.$
The class $\K(\varphi)$ is defined to be the set of functions $f\in\A$
with $1+zf''(z)/f'(z)\prec \varphi(z).$
Here, an analytic function $g$ on $\D$ is said to be subordinate
to another $h$ and denoted by $g\prec h$ or $g(z)\prec h(z)$
if $g=h\circ\omega$ for an analytic function $\omega$ on $\D$
with $\omega(0)=0$ and $|\omega|<1.$
When $h$ is univalent, it is useful to note the following fact:
$g\prec h$ if and only if $g(0)=h(0)$ and $g(\D)\subset h(\D).$

Let
$$
P_\alpha(z)=\left(\frac{1+z}{1-z}\right)^\alpha
$$
for a constant $\alpha>0.$
Then $P_\alpha$ maps $\D$ univalently onto the sector $|\arg w|<\pi\alpha/2$
for $0<\alpha\le1.$
Therefore, $\K(P_\alpha)=\K_\alpha$ for $0<\alpha<1$ and $\K(P_1)=\K.$

Ma and Minda \cite{MM92U} and R\o nning \cite{Ron93} found the following
characterization of the class $\UCV.$
A function $f\in\A$ is uniformly convex if and only if
$\Re[1+zf''(z)/f'(z)]>|zf''(z)/f'(z)|,~z\in\D.$
Noting the fact that the function
\begin{equation}\label{eq:P}
P(z)=1+\frac{2}{\pi^2}\left(\log\frac{1+\sqrt z}{1-\sqrt z}\right)^2
\end{equation}
maps $\D$ univalently onto the domain $\{w: \Re w>|w-1|\},$
we have $\UCV=\K(P)$ (see \cite[p.~191]{Ron93}).

We first formulate the sharp norm estimate for the class $\K(\varphi).$
To this end, we consider the quantity
$$
F(s,t)=\frac{(1-t^2)^2}{2t^2}A(s)
+(1-t^2)\left(1-\frac{s^2}{t^2}\right)B(s),
$$
where
$$
A(s)=\sup_{|z|=s}\big|2z\varphi'(z)+1-\varphi(z)^2\big|,
\quad\text{and}\quad
B(s)=\sup_{|z|=s}\big|\varphi'(z)\big|.
$$
Define
$$
N(\varphi)=\sup_{0<s<t<1}F(s,t).
$$
Then we have the following.

\begin{mainthm}\label{thm:general}
Let $\varphi$ be an analytic function on the unit disk with $\varphi(0)=1.$
Then, the sharp inequality
$\|S_f\|\le N(\varphi)$ holds for $f\in\K(\varphi).$
\end{mainthm}

Note here that $\varphi$ is not required to satisfy $\Re\varphi>0$
in the theorem, though there is no guarantee for finiteness of $N(\varphi)$
in this general case.

\begin{proof}
Denote by $\W$ the set of analytic functions $\omega$ on $\D$ with
$\omega(0)=0$ and $|\omega|<1.$

Let $f\in\K(\varphi).$
Since $1+zf''(z)/f'(z)\prec \varphi(z)$ by definition,
we have $f''(z)/f'(z)=(\varphi(\omega(z))-1)/z$
for an $\omega\in\W.$
Set $w=\omega(z)$ for a fixed $z\in\D.$
Then the Schwarzian derivative $S_f$ of $f$ can be expressed by
$$
S_f(z)=
\frac{\varphi'(w)\omega'(z)}z
-\frac{\varphi(w)^2-1}{2z^2}.
$$
We now recall Dieudonn\'e's lemma (cf.~\cite{Duren:univ}): for a fixed pair
of points $z, w\in\D$ with $|w|\le |z|,$ one has
$$
\left\{\omega'(z): \omega\in\W, \omega(z)=w \right\}
=\left\{v\in\C:
\left|v-\frac{w}{z}\right|\le\frac{|z|^2-|w|^2}{|z|(1-|z|^2)}\right\}.
$$
This means that $\zeta=\omega'(z)-w/z$ varies over the closed disk
$$|\zeta|\le (t^2-|w|^2)/t(1-t^2)$$ for fixed $|z|=t<1.$ Then we can
write
\begin{align*}
S_f(z)&=
\frac{\varphi'(w)}z\left(\zeta+\frac wz\right)
-\frac{\varphi(w)^2-1}{2z^2} \\
&=\frac{2w\varphi'(w)+1-\varphi(w)^2}{2z^2}+\zeta\cdot\frac{\varphi'(w)}{z}.
\end{align*}
Therefore, for $|z|=t<1,$ we have the sharp inequality
$$
|S_f(z)|\le
\frac{|2w\varphi'(w)+1-\varphi(w)^2|}{2t^2}+
\frac{t^2-|w|^2}{t^2(1-t^2)}|\varphi'(w)|
$$
for $f\in\K(\varphi).$
This fact can be expressed by
\begin{align*}
\sup_{f\in\K(\varphi)}|S_f(z)|
&=\sup_{s\le t}\sup_{|w|=s}
\left[\frac{|2w\varphi'(w)+1-\varphi(w)^2|}{2t^2}+
\frac{t^2-|w|^2}{t^2(1-t^2)}|\varphi'(w)|\right] \\
&=\sup_{s\le t}
\left[\frac{A(s)}{2t^2}+
\frac{t^2-s^2}{t^2(1-t^2)}B(s)\right].
\end{align*}
Hence, we have
$$
\sup_{f\in\K(\varphi)}(1-t^2)|S_f(z)|=\sup_{s<t}F(s,t)
$$
for a fixed point $z\in\D$ with $|z|=t$ and the required relation.
\end{proof}

As we will see below, we often have the relations
$$
A(s)=2s\varphi'(s)+1-\varphi(s)^2
\quad\text{and}\quad
B(s)=\varphi'(s)
$$
for $0\le s<1.$
Then, by a simple computation, we have the expression
\begin{equation}\label{eq:F}
F(s,t)=\frac{(1-t^2)^2}{2t^2}(1-\varphi(s)^2)
+\frac{(1-t^2)(1-s)(s+t^2)}{t^2}\varphi'(s).
\end{equation}

Observe that $F(s,t)$ is described in terms of $s$ and $t^2$ in this case.

\section{Proof of Theorem \ref{thm:main}}

We begin with the following properties of $P_\alpha.$

\begin{lem}\label{lem:P}
The functions $P_\alpha(z)$ and $Q_\alpha(z)=2zP_\alpha'(z)+1-P_\alpha(z)^2$
have non-negative Taylor coefficients about $z=0$
for $0<\alpha\le1.$
\end{lem}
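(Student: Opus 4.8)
The plan is to handle both functions at once by a change of variable that turns $P_\alpha$ into an exponential. Set $t=\tfrac12\log\frac{1+z}{1-z}$, so that $z=\tanh t$ and $P_\alpha(z)=e^{2\alpha t}$. The key structural observation is that $t$, viewed as a function of $z$, equals $\sum_{k\ge0}z^{2k+1}/(2k+1)$, which has non-negative Taylor coefficients and vanishes at the origin. Since the composition $g\circ h$ of a power series $g(t)=\sum b_m t^m$ with $b_m\ge0$ and a power series $h$ with non-negative coefficients and $h(0)=0$ again has non-negative coefficients, it suffices in each case to expand the function as a power series in $t$ and check that those coefficients are non-negative. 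For $P_\alpha$ this is immediate, since $P_\alpha=e^{2\alpha t}=\sum_{m\ge0}(2\alpha)^m t^m/m!$ has manifestly non-negative coefficients for $\alpha>0$.

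For $Q_\alpha$ I would first record the differential relation $(1-z^2)P_\alpha'=2\alpha P_\alpha$, which gives $2zP_\alpha'=\frac{4\alpha z}{1-z^2}P_\alpha$. Substituting $z=\tanh t$ turns $\frac{4\alpha z}{1-z^2}$ into $2\alpha\sinh 2t$, so that, after writing $\sinh 2t$ in exponentials, $Q_\alpha=2zP_\alpha'+1-P_\alpha^2$ becomes
\[
Q_\alpha=\alpha e^{(2\alpha+2)t}-\alpha e^{(2\alpha-2)t}+1-e^{4\alpha t}.
\]
Reading off the coefficient of $t^m$ (the constant term vanishes, and the coefficient carries the harmless positive factor $2^m\alpha/m!$) reduces the whole claim to the elementary inequality
\[
f_m(\alpha):=(\alpha+1)^m-(\alpha-1)^m-2^m\alpha^{m-1}\ge0,\qquad 0\le\alpha\le1,\ m\ge1.
\]

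The hard part is this last inequality, and it is where the real content sits. My plan is to expand $(\alpha+1)^m-(\alpha-1)^m=2\sum_{k\ \mathrm{odd}}\binom{m}{k}\alpha^{m-k}$, isolating the $k=1$ term $2m\alpha^{m-1}$, and then to exploit that $0\le\alpha\le1$ forces $\alpha^{m-k}\ge\alpha^{m-1}$ for every odd $k\ge3$. Bounding each remaining positive term from below by $\alpha^{m-1}$ and invoking the identity $\sum_{k\ \mathrm{odd}}\binom{m}{k}=2^{m-1}$, the negative contribution $-2^m\alpha^{m-1}$ is exactly cancelled, leaving $f_m(\alpha)\ge0$. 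As sanity checks, $f_1=f_2\equiv0$ (mirroring the vanishing of the first two Taylor coefficients of $Q_\alpha$), and the estimate degenerates to equality at $\alpha=1$, consistent with the fact that $Q_1\equiv0$. The only delicate point is that the crude term-by-term bound must be tight enough to absorb the negative term, and it is precisely the binomial identity that makes the cancellation exact; once $f_m\ge0$ is in hand, the transfer principle of the first paragraph delivers the non-negativity of the $z$-coefficients of $Q_\alpha$, completing the proof.
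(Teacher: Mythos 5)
Your proof is correct, and for the $Q_\alpha$ part it takes a genuinely different route from the paper's. The paper treats $P_\alpha$ exactly as you do (exponential of $\alpha\log\frac{1+z}{1-z}$), but for $Q_\alpha$ it differentiates, writing $Q_\alpha'(z)=2P_\alpha'(z)\left(1-P_\alpha(z)+\frac{2z(\alpha+z)}{1-z^2}\right)$, and then invokes L\"owner's coefficient theorem for convex univalent maps to get $0\le a_n\le 2\alpha$ for the coefficients of $P_\alpha$; this makes the second factor $\sum(2\alpha-a_{2n-1})z^{2n-1}+\sum(2-a_{2n})z^{2n}$ manifestly coefficient-nonnegative, and since $Q_\alpha(0)=0$ the lemma follows. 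Your substitution $z=\tanh t$ instead linearizes everything into exponentials, $Q_\alpha=\alpha e^{(2\alpha+2)t}-\alpha e^{(2\alpha-2)t}+1-e^{4\alpha t}$, and reduces the claim to the elementary inequality $(\alpha+1)^m-(\alpha-1)^m\ge 2^m\alpha^{m-1}$ on $[0,1]$, which your term-by-term bound $\alpha^{m-k}\ge\alpha^{m-1}$ (valid precisely because $\alpha\le1$) combined with $\sum_{k\ \mathrm{odd}}\binom{m}{k}=2^{m-1}$ settles with exact cancellation. I checked the details: the identity for $Q_\alpha$ in $t$, the coefficient factor $2^m\alpha/m!$, the transfer principle through $t(z)=\sum_{k\ge0}z^{2k+1}/(2k+1)$ (nonnegative coefficients, vanishing at $0$), and the binomial estimate are all sound. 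The trade-off: the paper's argument is shorter but leans on a nontrivial result from geometric function theory (L\"owner's bound, plus convexity of the sector image of $P_\alpha$, which is where $\alpha\le1$ enters for them); yours is completely elementary and self-contained, yields explicit coefficients, and makes the degenerate case $\alpha=1$ (where $Q_1\equiv0$) transparent, with $\alpha\le1$ entering instead through the monotonicity of $\alpha^j$ in $j$.
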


\begin{proof}
Since
$$
\log\frac{1+z}{1-z}=2\sum_{n=1}^\infty\frac{z^{2n-1}}{2n-1},
$$
the function $P_\alpha(z)=\exp(\alpha\log\frac{1+z}{1-z})$ has
positive Taylor coefficients.
Note also, by this expression, that $P_\alpha$ satisfies the differential
equation
\begin{equation}\label{eq:de}
\frac{P_\alpha'(z)}{P_\alpha(z)}=\frac{2\alpha}{1-z^2},
\end{equation}
and therefore,
\begin{equation}\label{eq:de2}
P_\alpha''(z)=\frac{2(\alpha+z)}{1-z^2}P_\alpha'(z),
\end{equation}

We next observe the expansion
$$
P_\alpha(z)=1+\sum_{n=1}^\infty a_nz^n.
$$
Since $a_1=2\alpha$ and $P_\alpha$ maps $\D$
univalently onto a convex domain for $0<\alpha\le 1,$
by a theorem of L\"owner (cf.~\cite{Duren:univ}), we have
\begin{equation}\label{eq:an}
0\le a_n\le 2\alpha \quad (n=1,2,3,\dots).
\end{equation}
By using \eqref{eq:de} and \eqref{eq:de2}, we now have the expression
$$
Q_\alpha'(z)=2zP_\alpha''(z)+2P_\alpha'(z)(1-P_\alpha(z))
=2P_\alpha'(z)\left(1-P_\alpha(z)+\frac{2z(\alpha+z)}{1-z^2}\right).
$$
Since $P_\alpha'(z)$ has positive Taylor coefficients and
$$
1-P_\alpha(z)+\frac{2z(\alpha+z)}{1-z^2}
=\sum_{n=1}^\infty(2\alpha-a_{2n-1})z^{2n-1}
+\sum_{n=1}^\infty(2-a_{2n})z^{2n},
$$
the required assertion for $Q_\alpha$ is deduced from \eqref{eq:an}.
\end{proof}

We are now ready to prove our main theorem.

\begin{proof}[Proof of Theorem \ref{thm:main}]
In view of Theorem \ref{thm:general}, we need to show that
$N(P_\alpha)=2\alpha.$

By Lemma \ref{lem:P}, we can apply \eqref{eq:F} for $\varphi=P_\alpha:$
\begin{align*}
F(s,t)
&=\frac{(1-t^2)^2}{2t^2}(1-P_\alpha(s)^2)
+\frac{(1-t^2)(1-s)(s+t^2)}{t^2}P_\alpha'(s) \\
&=\frac{(1-t^2)^2}{2t^2}(1-P_\alpha(s)^2)
+2\alpha\frac{(1-t^2)(s+t^2)}{t^2(1+s)}P_\alpha(s).
\end{align*}
Here, we have used \eqref{eq:de}.

Since $F(0,t)=2\alpha(1-t^2)\to 2\alpha$ as $t\to0,$
it is enough to show the inequality $F(s,t)\le 2\alpha$ when
$0<s<t<1.$
Letting $x=1-t^2,$ we see now that
\begin{align*}
&F(s,t)\le 2\alpha \\
\Leftrightarrow \quad &
\left(1-P_\alpha(s)^2\right)x^2+\frac{4\alpha x(1+s-x)}{1+s}P_\alpha(s)
\le 4\alpha(1-x) \\
\Leftrightarrow \quad &
\left(P_\alpha(s)^2+\frac{4\alpha}{1+s}P_\alpha(s)-1\right)x^2
-4\alpha\big(1+P_\alpha(s)\big)x+4\alpha\ge0.
\end{align*}
The left-hand side in the last inequality
can be regarded as a quadratic polynomial in $x$ of the form
$$
Kx^2-4Mx+4L=K\left(x-\frac{2M}K\right)^2+\frac4K(KL-M^2)
$$
with $K>0.$
We now compute $KL-M^2$ as follows:
\begin{align*}
h(s):=&~\alpha\left(P_\alpha(s)^2+\frac{4\alpha}{1+s}P_\alpha(s)-1\right)
-\alpha^2\big(1+P_\alpha(s)\big)^2 \\
= &~\alpha\left((1-\alpha)P_\alpha(s)^2
+\frac{2\alpha(1-s)}{1+s}P_\alpha(s)-(1+\alpha)\right).
\end{align*}
Since
$$
h'(s)=\frac{4\alpha^2(1-\alpha)}{(1+s)^2}
\left(P_{\alpha+1}(s)-1\right)P_\alpha(s)>0
$$
for $s>0,$ the function $h(s)$ is monotone increasing in $0<s<1.$
Thus $h(s)>h(0)=0$ for $0<s<1.$
Therefore, we have seen that $KL-M^2\ge0,$ which implies $F(s,t)\le 2\alpha$
as expected.
\end{proof}

Obviously, the above proof covers the case when $\alpha=1.$ Thus, we
have obtained yet another proof of Theorem \ref{Thm:C}.

\section{Proof of Theorem \ref{thm:main2}}

We will need the following estimate.

\begin{lem}\label{lem:sum}
For every non-negative integer $n,$ the following inequality holds:
$$
\sum_{\substack{0\le k,l,m \\ k+l+m=n}}\frac1{(2k+1)(2l+1)(2m+1)}\le 1.
$$
\end{lem}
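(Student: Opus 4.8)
The plan is to collapse the triple sum into a one-dimensional recursion by exploiting the elementary symmetric partial fraction
$$
\frac1{abc}=\frac1{a+b+c}\left(\frac1{ab}+\frac1{bc}+\frac1{ca}\right),
$$
and then to run an easy induction on $n$. Throughout I write $S_n$ for the sum in question and introduce its two-index companion $T_j=\sum_{k+l=j}1/[(2k+1)(2l+1)]$.

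First I would treat $T_j$. Applying the two-term version $\frac1{ab}=\frac1{a+b}(\frac1a+\frac1b)$ with $a=2k+1$ and $b=2l+1$ (so $a+b=2j+2$) and summing over $k+l=j$ gives the closed form $T_j=\frac1{j+1}\sum_{i=0}^{j}\frac1{2i+1}$. From this I read off $T_0=1$ and, for $j\ge1$, the bound $T_j\le\frac23$: indeed $\sum_{i=0}^{j}\frac1{2i+1}=1+\sum_{i=1}^{j}\frac1{2i+1}\le 1+\frac{j}{3}\le\frac{2(j+1)}3$, where the last inequality holds precisely because $j\ge1$.

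Next I would treat $S_n$ itself. Taking $a=2k+1,\,b=2l+1,\,c=2m+1$ (so $a+b+c=2n+3$) in the three-term identity and using the symmetry of the constraint $k+l+m=n$ under permutations of $(k,l,m)$, the three resulting double sums coincide, giving $S_n=\frac{3}{2n+3}\sum_{k+l+m=n}\frac1{(2k+1)(2l+1)}$. Since $m=n-k-l$ is a free slack variable, the remaining double sum is exactly $\sum_{j=0}^{n}T_j$, whence $(2n+3)S_n=3\sum_{j=0}^{n}T_j$. Subtracting the same identity written for $n-1$ then yields the clean recursion $(2n+3)S_n=(2n+1)S_{n-1}+3T_n$.

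Finally I would induct. The base case is $S_0=1$. For $n\ge1$, assuming $S_{n-1}\le1$ and invoking $T_n\le\frac23$, the recursion gives
$$
S_n=\frac{(2n+1)S_{n-1}+3T_n}{2n+3}\le\frac{(2n+1)+2}{2n+3}=1,
$$
which closes the induction. I expect the only genuine obstacle to be spotting the symmetric partial fraction that telescopes the triple sum; once that identity is in hand every remaining estimate is routine, and the argument also transparently explains why equality holds exactly at $n=0,1$, the two places where $S_{n-1}=1$ and $T_n=\frac23$ are simultaneously sharp.
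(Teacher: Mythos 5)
Your proof is correct and follows essentially the same route as the paper: the same two- and three-term partial fraction identities, the same closed form and bound $T_j\le\tfrac23$ for $j\ge1$, and the same key identity $(2n+3)S_n=3\sum_{j=0}^n T_j$. The only cosmetic difference is that the paper bounds $\sum_{j=0}^n T_j\le 1+\tfrac{2n}{3}$ directly and divides, whereas you repackage that same estimate as a recursion plus induction.
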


\begin{proof}
We denote by $A_n$ the summation in question.
Also, let
$$
B_n=\sum_{\substack{0\le k,l \\ k+l=n}}\frac1{(2k+1)(2l+1)}.
$$
By the partial fraction decomposition, we observe
\begin{align*}
B_n&=\frac1{2n+2}\sum_{k+l=n}\left(\frac1{2k+1}+\frac1{2l+1}\right) \\
&=\frac1{n+1}\sum_{k=0}^n\frac1{2k+1}.
\end{align*}
Since $1/(2k+1)\le 1/3$ for $k\ge1,$ we have
\begin{equation}\label{eq:Bn}
B_n\le \frac{1}{n+1}\left(1+\frac n3\right)\le \frac23
\quad \text{for}~n\ge1.
\end{equation}
Similarly, by the partial fraction decomposition, we have
\begin{align*}
&\frac{2n+3}{(2k+1)(2l+1)(2m+1)} \\
=&\frac1{(2k+1)(2l+1)}+\frac1{(2l+1)(2m+1)}
+\frac1{(2m+1)(2k+1)}
\end{align*}
for $k+l+m=n.$
We now apply it and take into account
the symmetry in $k, l, m$ and \eqref{eq:Bn} to obtain finally
$$
A_n
=\frac3{2n+3}\sum_{j=0}^n B_j
\le\frac3{2n+3}\left(1+\frac23 n\right)=1.
$$
\end{proof}

\noindent
{\bf Note.}
In the previous manuscript, we had only a lengthy proof for 
Lemma \ref{lem:sum}. The second author asked for an
elegant proof of it in {\it Sugaku Seminar},
a mathematical monthly magazine published in Japan.
Several readers gave nice proofs as above.
See an article (in Japanese) of the second author in
{\it ibid.}~vol.~{\bf 50}, no.~3 (2011) for details.
The authors would like to express their thanks to the
readers of the magazine.

\bigskip

We next show a result similar to Lemma \ref{lem:P}.

\begin{lem}\label{lem:P1}
The functions $P$ given in \eqref{eq:P} and
$Q(z)=2zP'(z)+1-P(z)^2$ have non-negative Taylor
coefficients about $z=0.$
\end{lem}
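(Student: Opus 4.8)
The plan is to reduce everything to the single auxiliary power series
\[
g(z)=\sum_{n=0}^\infty\frac{z^n}{2n+1},
\]
which manifestly has positive coefficients, and to exploit the factorization
$\log\frac{1+\sqrt z}{1-\sqrt z}=2\sqrt z\,g(z)$. Substituting this into
\eqref{eq:P} immediately gives $P(z)=1+\frac{8}{\pi^2}z\,g(z)^2$, and since
$g(z)^2$ has non-negative (indeed positive) coefficients, the assertion for
$P$ follows at once. The work is therefore concentrated on $Q$.

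For $Q$ I would write $L=\log\frac{1+\sqrt z}{1-\sqrt z}$, so that
$P-1=\frac{2}{\pi^2}L^2$ and $L'=1/(\sqrt z\,(1-z))$. A direct computation
(analogous to the one in Lemma \ref{lem:P}, but now carried out on $Q$ itself
rather than on $Q'$) yields $2zP'=\frac{16z\,g}{\pi^2(1-z)}$ and
$1-P^2=-\frac{16}{\pi^2}z\,g^2-\frac{64}{\pi^4}z^2g^4$, whence
\[
Q(z)=\frac{16z}{\pi^2}\,g(z)\left[\frac{1}{1-z}-g(z)-\frac{4z}{\pi^2}g(z)^3\right].
\]
Because $g$ has non-negative coefficients and the prefactor $16z/\pi^2$ is
harmless, it suffices to prove that the bracketed series has non-negative
Taylor coefficients.

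The hard part is the subtracted cubic term $\frac{4z}{\pi^2}g^3$: both $g$ and
$g^3$ carry positive coefficients, so one must show that the positive piece
$\frac{1}{1-z}-g$ dominates it coefficientwise. Here the coefficient of $z^n$
in $\frac{1}{1-z}-g$ is $1-\frac{1}{2n+1}=\frac{2n}{2n+1}$, while the
coefficient of $z^n$ in $\frac{4z}{\pi^2}g^3$ is exactly $\frac{4}{\pi^2}$
times the triple sum $\sum_{k+l+m=n-1}1/[(2k+1)(2l+1)(2m+1)]$ of
Lemma \ref{lem:sum}. For $n=0$ both vanish, and for $n\ge1$ Lemma
\ref{lem:sum} bounds that triple sum by $1$, so the $z^n$ coefficient of the
bracket is at least $\frac{2n}{2n+1}-\frac{4}{\pi^2}\ge\frac23-\frac{4}{\pi^2}>0$.
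This is precisely the step where Lemma \ref{lem:sum} and the numerical fact
$4/\pi^2<2/3$ are needed; the remaining manipulations are routine, and the
non-negativity of the coefficients of $Q$ then follows as a product of series
with non-negative coefficients.
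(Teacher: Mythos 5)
Your proposal is correct and follows essentially the same route as the paper: the same factorization $Q(z)=\frac{16z}{\pi^2}G(z)\bigl[\frac{1}{1-z}-G(z)-\frac{4z}{\pi^2}G(z)^3\bigr]$, the same identification of the coefficients of $G^3$ with the triple sums of Lemma \ref{lem:sum}, and the same numerical comparison (your $\frac{2n}{2n+1}\ge\frac23>\frac{4}{\pi^2}$ is just the paper's $\frac{\pi^2}{4}\cdot\frac{2n+2}{2n+3}\ge\frac{\pi^2}{6}>1\ge A_n$ with the index shifted by one). No gaps; the argument is sound as written.
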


\begin{proof}
Let
$$
G(z)=\sum_{n=0}^\infty\frac{z^n}{2n+1}
=\frac1{2\sqrt z}\log\frac{1+\sqrt{z}}{1-\sqrt{z}}.
$$
Then
\begin{equation}\label{eq:P2}
P(z)=1+\frac{8}{\pi^2}zG(z)^2.
\end{equation}
Therefore, it is immediate to see that $P(z)$ has positive Taylor
coefficients about $z=0.$
Furtheremore, we can easily check the formula
\begin{equation}\label{eq:P'}
P'(z)=\frac{8G(z)}{\pi^2(1-z)}.
\end{equation}
We also note that $G^3$ can be described by
$$
G(z)^3=\sum_{n=0}^\infty A_nz^n,
$$
where $A_n$ is the number given in Lemma \ref{lem:sum}.
With these facts in mind, we now compute
\begin{align*}
Q(z)
&=\frac{16}{\pi^2}zG(z)\left(
\frac1{1-z}-G(z)-\frac{4z}{\pi^2}G(z)^3\right) \\
&=\frac{64}{\pi^4}z^2G(z)
\sum_{n=0}^\infty\left(\frac{\pi^2}{4}\cdot\frac{2n+2}{2n+3}-A_n\right)z^n.
\end{align*}
Since
$$
\frac{\pi^2}{4}\cdot\frac{2n+2}{2n+3}
\ge\frac{\pi^2}{6}>1\ge A_n
$$
for $n=0,1,2,\dots$ by Lemma \ref{lem:sum}, we see that
the Taylor coefficients of $Q$ about $z=0$ are non-negative.
\end{proof}

We are now in a position to show the second main result.

\begin{proof}[Proof of Theorem \ref{thm:main2}]
We take the same strategy as in the proof of Theorem \ref{thm:main}.
In view of Theorem \ref{thm:general}, we only need to show that
$N(P)=8/\pi^2.$ By Lemma \ref{lem:P1} and formulae \eqref{eq:F},
\eqref{eq:P2}, \eqref{eq:P'}, we now have
\begin{align*}
F(s,t)&=\frac{(1-t^2)^2}{2t^2}(1-P(s)^2)
+\frac{(1-t^2)(1-s)(s+t^2)}{t^2}P'(s) \\
&=\frac{8(1-t^2)}{\pi^2t^2}\left(
(s+t^2)G(s)-s(1-t^2)G(s)^2-\frac4{\pi^2}s^2(1-t^2)G(s)^3 \right) \\
&=\frac{8x}{\pi^2(1-x)}\left(
(s+1-x)G(s)-sxG(s)^2-\frac4{\pi^2}s^2xG(s)^3 \right),
\end{align*}
where we put $x=1-t^2.$
Since $F(0,t)=8(1-t^2)/\pi^2\to 8/\pi^2$ as $t\to0,$
it suffices to show that $F(s,t)\le 8/\pi^2$ for $0<s<t<1.$
This is equivalent to the inequality
\begin{align*}
&x\left((s+1-x)G(s)-sxG(s)^2-\frac4{\pi^2}s^2xG(s)^3 \right)
\le 1-x \\
\Leftrightarrow&
\left(G(s)+sG(s)^2+\frac4{\pi^2}G(s)^4\right)x^2
-\big(1+(1+s)G(s)\big)x+1\ge0
\end{align*}
for $0<x<1-s^2.$
The left-hand side in the last inequality is of the form
$Kx^2-Mx+L$ with
$$
4KL-M^2=\left(\frac{4sG(s)^2}{\pi}\right)^2
-\big(1-(1-s)G(s)\big)^2.
$$
It is enough to show $4KL-M^2\ge0.$
Since $G(s)<1/(1-s),$
we observe that $4KL-M^2\ge0$ if and only if
$$
\frac{4sG(s)^2}{\pi}\ge 1-(1-s)G(s),
$$
which is equivalent to
\begin{equation}\label{eq:last}
G(s)\ge \pi\frac{\sqrt{(1-s)^2+16s/\pi}-1+s}{8s}.
\end{equation}
Since it is easily checked that
$\pi(\sqrt{(1-s)^2+16s/\pi}-1+s)/8s<1$ and $G(s)>1$ for $0<s<1,$
the inequality \eqref{eq:last} certainly holds.
Thus the proof is now complete.
\end{proof}

Define a function $f_0\in\UCV$ by the relation
$$
1+\frac{zf_0''(z)}{f_0'(z)}=P(z^2)
=1+\frac{2}{\pi^2}\left(\log\frac{1+z}{1-z}\right)^2.
$$
Then we have
$$
f_0(z)=z+\frac{4}{3\pi^2}z^3+\left(\frac{4}{15\pi^2}+\frac{8}{5\pi^4}\right)z^5
+\cdots
$$
and thus $S_{f_0}(0)=\frac{8}{\pi^2}$ so that
$\|S_{f_0}\|=\frac{8}{\pi^2}.$

\subsection*{Acknowledgements}
The second author was supported in part by JSPS Core-to-Core Program
18005 and JSPS Grant-in-Aid for Exploratory Research, 19654027.

\end{document}